\newtheorem{theorem}{Theorem}[section]
\newtheorem{lemma}[theorem]{Lemma}
\newtheorem{proposition}[theorem]{Proposition}
\newtheorem{remark}[theorem]{Remark}
\newenvironment{proof}[1][\proofname]{\noindent\textit{#1}.}{\hfill\raisebox{-1ex}{$\boxtimes$}}
\newcommand{\proofnameTwo}{Proof of Theorem~\ref{ML} modulo Proposition~\ref{ML_stable_ideals}}
\newcommand{\proofnameMain}{Proof of Proposition~\ref{ML_stable_ideals}}
\newcounter{tmpabcd}
\newcounter{tmpnum}
\newcounter{tmprome}
\newcommand{\mcc}{\mathbb{C}}
\newcommand{\mnn}{\mathbb{N}}
\newcommand{\mzz}{\mathbb{Z}}
\newcommand{\mqq}{\mathbb{Q}}
\newcommand{\diff}[1]{\frac{\partial}{\partial #1}}
\renewcommand{\b}[1]{{{#1}}}
\newcommand{\hidden}[1]{}
\newcommand{\idp}{\ensuremath{\mathcal{P}}}
\newcommand{\ul}[1]{\underline{#1}}
\newcommand{\ull}[1]{\underline{\b{#1}}}
\newcommand{\ord}{\ensuremath{{\rm ord}}}
\newcommand{\ordz}{\ensuremath{{\rm ord_{\b{z}=0}}}}
\newcommand{\trdeg}{\ensuremath{{\rm tr.deg.}}}
\begin{document} \sloppy{


\title{Multiplicity estimate for solutions of extended Ramanujan's system.}

\author{Evgeniy \textsc{Zorin}\footnote{Institut de math\'ematiques de Jussieu, Universit\'e Paris 7, Paris, France. E-mail: evgeniyzorin@yandex.ru}}







\maketitle

\begin{abstract}
\selectlanguage{english}
We establish a new \emph{multiplicity lemma} for solutions of a differential system extending Ramanujan's classical differential relations. This result can be useful in the study of arithmetic properties of values of Riemann zeta function at odd positive integers (Nesterenko, 2011). 
\end{abstract}









\selectlanguage{english}

\section{Introduction}

In what follows we denote by $\sigma_k(n)$, $k\in\mzz$, $n\in\mnn$ the sum of $k$th powers of divisors of $n$:
\begin{equation*}
    \sigma_{k}(n):=\sum_{d|n}d^k.
\end{equation*}
In this paper we consider the following sets of functions. First of all, the Eisenstein series
\begin{equation} \label{intro_functions_E}
    E_{2k}(z):=1-\frac{4k}{B_{2k}}\sum_{n=1}^{\infty}\sigma_{2k-1}(n)z^n,\quad k\in\mnn,
\end{equation}
where $B_{2k}$ are Bernoulli numbers. Also we consider
\begin{equation*}
    g_{u,v}(z):=\sum_{n=1}^{\infty}n^u\sigma_{-v}(n)z^n,\quad 0\leq u<v,\quad, u,v\in\mnn.
\end{equation*}


It is well-known that functions $E_2$, $E_4$ and $E_6$ are algebraically independent over $\mcc(z)$ and all the other functions $E_{2k}$, $k\geq 4$ can be expressed in terms of $E_4$ and $E_6$ (see for instance~\cite{Serre1973}). More precisely, for all $k\geq 4$ there exists a polynomial $A_{k}\in\mcc[X,Y]$ such that
\begin{equation*}
    E_{2k}(z)=A_{k}(E_4(z),E_6(z)).
\end{equation*}
These polynomials $A_{k}(X,Y)$, $k\geq 4$ contain only monomials $M$ of bi-degrees $\left(\deg_X M,\deg_Y M\right)$ satisfying $2\deg_X M+3\deg_Y M=k$. 

In 2010 P.Kozlov 
proved (see~\cite{N2011}, page~2) that for any fixed $m\in\mnn$ all the functions
\begin{equation} \label{intro_ts}
    E_2(z),E_4(z),E_6(z),g_{u,v}(z),\quad 0\leq u<v\leq m
\end{equation}
are algebraically independent over $\mcc(z)$.

The functions~\eqref{intro_ts}
satisfy the following system of differential equations~\cite{N2011}. Denote $\delta:=z\frac{d}{dz}$. Then
\begin{equation} \label{intro_system_E}
    \delta E_2=\frac1{12}\left(E_2^2-E_4\right),\delta E_4=\frac1{3}\left(E_2E_4-E_6\right),\delta E_6=\frac12\left(E_2E_6-E_4^2\right)
\end{equation}
and for any odd $v\geq 3$
\begin{equation} \label{intro_system_g}
\begin{aligned}
    &\delta g_{u,v}(z)=g_{u+1,v}(z),\quad 0\leq u<v-1,\\&\delta g_{v-1,v}(z)=B_{2v+2}\frac{A_{v+1}(E_4(z),E_6(z))-1}{2v+2}.
\end{aligned}
\end{equation}
In the case $v=1$ one has
\begin{equation} \label{intro_system_g_1}
\delta g_{0,1}(z)=\frac{1}{24}\left(1-E_2(z)\right).
\end{equation}

Yu.Nesterenko~\cite{N2011} showed that values of functions $g_{u,v}(z)$ are closely related to the values of the Riemann zeta function $\zeta$ at odd positive integers. In particular, $\zeta(4n+3)\in\mqq(E_2(i),g_{0,4n+3}(i))$~\cite{N2011}. Whereas the system~\eqref{intro_system_E}, \eqref{intro_system_g}, \eqref{intro_system_g_1} for functions $E_2$, $E_4$, $E_6$, $(g_{u,v})_{0\leq u<v\leq m}$, $m\in\mnn$, is quite a simple extension of the system~\eqref{intro_system_E}, and in the case of the system~\eqref{intro_system_E} Nesterenko~\cite{N1996} established an optimal algebraic independence result for its solutions~\cite{N1996}, one may hope that this approach will lead to some results concerning algebraic independence of values of $\zeta$ at positive integral odd points. On this way, an important stage is \emph{a multiplicity lemma} for the functions in question.

In this paper we adopt the method from~\cite{N1996} and~\cite{NP}[Chapter~10] to establish (for any fixed odd $m\geq 3$) a multiplicity lemma for the whole set of functions $E_2$, $E_4$, $E_6$, $(g_{u,v})$, $0\leq u<v\leq m$, see Theorem~\ref{ML} below.

\section{Multiplicity Lemma}

Let $m\in\mnn$ be a fixed positive odd integer. We introduce the following notation:
\begin{equation*}
    R:=\mcc[X_0,X_1,X_2,X_3,Y_{0,1},Y_{0,3},Y_{1,3},Y_{2,3},\dots,Y_{m-1,m}].
\end{equation*}

\begin{theorem} \label{ML}
Let $m\geq 1$ be an odd integer. For all non-zero $P\in R$ there exists a constant $C$ depending on $m$ only such that
\begin{multline} \label{ML_result}
    \ord_{z=0}P(z,E_2(z),E_4(z),E_6(z),g_{0,1}(z),g_{0,3}(z),\dots,g_{0,m}(z),\dots,g_{m-1,m}(z))\\\leq C\left(\deg_{X_0} P+1\right)\left(\deg_{Eg} P+1\right)^{\left(\frac{m-1}2\right)^2+3},
\end{multline}
where $\deg_{Eg}P$ denotes the total degree of $P$ in the variables $X_1,X_2,X_3,Y_{0,1},\dots,Y_{m-1,m}$, i.e. all the variables appearing in $R$ but $X_0$.
\end{theorem}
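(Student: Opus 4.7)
The strategy follows the two-stage framework developed by Nesterenko in~\cite{N1996} and~\cite[Chapter~10]{NP}: the argument is organised into a reduction of Theorem~\ref{ML} to a supporting proposition on $\partial$-stable prime ideals of $R$, followed by the proof of that proposition.

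First, extend $\delta = z\,d/dz$ to a derivation $\partial$ on the ring $R$ by setting $\partial X_0 = X_0$ and reading the evolution rules on $X_1, X_2, X_3$ from~\eqref{intro_system_E} and on $Y_{u,v}$ from~\eqref{intro_system_g},~\eqref{intro_system_g_1}. By construction, the evaluation homomorphism sending $X_0 \mapsto z$, $X_i \mapsto E_{2i}$ for $i=1,2,3$, and $Y_{u,v} \mapsto g_{u,v}$ intertwines $\partial$ with $\delta$. Equip $R$ with the bi-grading $(D_0, D) := (\deg_{X_0}, \deg_{Eg})$: since $\partial$ raises $D_0$ by at most $1$ and $D$ by at most a constant $K = K(m)$ (bounding the total degree of the polynomials $A_{v+1}(X_2, X_3)$ for $v \leq m$), the bi-degree of $\partial^s Q$ is at most $(D_0 + s, D + Ks)$ whenever $Q$ has bi-degree $(D_0, D)$.

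Assume, for contradiction, that $\ord_{z=0} P(\ldots) > C(D_0+1)(D+1)^{\left(\frac{m-1}{2}\right)^2 + 3}$ for a suitable $C = C(m)$. Build a strictly increasing chain of prime ideals $\idp_0 \subsetneq \idp_1 \subsetneq \cdots \subsetneq \idp_N$ in $R$: let $\idp_0$ be a minimal prime of $(P)$, and at each step take $\idp_{i+1}$ to be a minimal prime of $\idp_i + (\partial Q_i)$ for some $Q_i \in \idp_i$ with $\partial Q_i \notin \idp_i$, chosen so as to preserve a large ``order of vanishing along the formal solution'' invariant. The Nesterenko--Philippon form of the Bezout inequality bounds the bi-graded degree of each $\idp_i$ by a product involving $(D_0 + O(i))$ and $(D + O(i))^{\height \idp_i - 1}$. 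The chain must eventually terminate: either $\idp_N$ has maximal height (leading to a contradiction, since a bounded-degree algebraic relation among the values of $E_2, E_4, E_6, g_{u,v}$ over $\mcc(z)$ is ruled out by Kozlov's algebraic independence result cited in~\cite{N2011}), or $\idp_N$ is $\partial$-stable, the case to be excluded by the supporting proposition.

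The crux of the argument is the supporting proposition, asserting that no proper $\partial$-stable prime $\idp \subsetneq R$ with $X_0 \notin \idp$ can satisfy the bi-degree bound produced by the chain. The main obstacle is extracting the sharp exponent $\left(\frac{m-1}{2}\right)^2 + 3$. The $+3$ contribution is inherited from Nesterenko's classical argument for the Eisenstein triple $(E_2, E_4, E_6)$. The $\left(\frac{m-1}{2}\right)^2$ contribution requires an inductive analysis on the chains $Y_{0,v}, \ldots, Y_{v-1,v}$ for odd $v \in \{3, 5, \ldots, m\}$: since $\partial Y_{u,v} = Y_{u+1,v}$ for $u < v-1$, a $\partial$-stable prime must interact rigidly with each chain, and the boundary relations $\partial Y_{v-1,v} \in \mcc[X_2, X_3]$ inject $(m-1)/2$ algebraic constraints on $X_2, X_3$ modulo $\idp$. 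Iterating this reduction across the $(m-1)/2$ higher chains while controlling bi-degree inflation through Bezout-type inequalities is what yields the claimed exponent.
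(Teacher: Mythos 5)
You have reproduced the outer architecture of Nesterenko's method --- reduce the multiplicity estimate to a statement about $D$-stable prime ideals, then analyse those ideals --- but not its content. Note first that the paper does not re-derive the chain-of-ideals machinery at all: it invokes the conditional multiplicity lemma of~\cite{NP} (Theorem~\ref{theoNesterenko_classique}), under which the exponent $\left(\frac{m-1}2\right)^2+3$ is simply the number $n$ of functions and requires no further work. Your narrative attributing ``$+3$'' to the Eisenstein triple and ``$\left(\frac{m-1}2\right)^2$'' to an inductive analysis of the chains $Y_{0,v},\dots,Y_{v-1,v}$ misplaces the difficulty: the exponent is automatic once the hypothesis on stable ideals is verified, and that verification is the entire proof.

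That verification is precisely what your proposal lacks. Your ``supporting proposition'' (no proper $\partial$-stable prime with $X_0\notin\idp$ satisfies the bi-degree bound produced by the chain) is defended only by the assertion that the boundary relations $\partial Y_{v-1,v}\in\mcc[X_2,X_3]$ ``inject algebraic constraints'' and that iterating this ``yields the claimed exponent''; this is a declaration of intent, not an argument. Worse, as literally stated the proposition is false: $(\Delta)$ with $\Delta=X_2^3-X_3^2$ is a $D$-stable prime of fixed small degree not containing $X_0$, so it meets any bi-degree bound for large $\deg_{Eg}P$. What rescues the method is that such primes have \emph{bounded order of vanishing along the solution} --- an invariant you mention when building the chain but then drop from the statement of the proposition you actually need. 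The paper's proof of the correct statement (Proposition~\ref{ML_stable_ideals}: every non-zero $D$-stable prime contains $z$ or $\Delta$) rests on three concrete ingredients absent from your sketch: (i) the classification of \emph{principal} $D$-stable primes (Lemma~\ref{lemma_principal}), obtained from two weight functions $\phi,\phi_2$ together with an evaluation along the solution that forces $DA=(aX_1+b)A$ with $a,b\in\mathbb{Z}$ and hence $A=\Delta^az^b$; (ii) a tower of ring extensions each adding a single variable, so that the first non-trivial intersection $\idp\cap R_{i+1}$ is automatically principal and $D$-stable, reducing the general case to (i); and (iii) Pellarin's theorem~\cite{Pellarin2006} for the base case $\idp\cap\mcc[z,\ul{X}]\neq\{0\}$. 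Without these, or some substitute for them, the proof has not been carried out.
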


\begin{remark} The exponent $\left(\frac{m-1}2\right)^2+3$ in the r.h.s. of~\eqref{ML_result} equals the number of functions different than $z$ in the l.h.s. of~\eqref{ML_result} and also the transcendence degree of $R$ over $\mcc(z)$. Hence Theorem~\ref{ML} provides multiplicity estimate with the optimal exponent.
\end{remark}

In the sequel we denote
\begin{equation*}
D_0:=z\frac{d}{dz}+\frac{1}{12}\left(X_1^2-X_2\right)\frac{d}{dX_1}+\frac{1}{3}\left(X_1X_2-X_3\right)\frac{d}{dX_2}+\frac{1}{2}
    \left(X_1X_3-X_2^2\right)\frac{d}{dX_3},
\end{equation*}
\begin{equation*}
    D_1:=\frac{1}{24}\left(1-X_2\right)\frac{d}{dY_{0,1}},
\end{equation*}
\begin{equation*}
D_v:=\sum_{k=0}^{v-2}Y_{k+1,v}\frac{d}{dY_{k,v}}+B_{v+1}\frac{A_{v+1}(X_2,X_3)-1}{2v+2}\frac{d}{dY_{v-1,v}}, \quad v=3,5,\dots,m
\end{equation*}
and
\begin{equation} \label{intro_def_D}
    D:=D_0+\sum_{k=0}^{(m-1)/2}D_{2k+1}.
\end{equation}
The differential operator $D$ satisfies
\begin{multline}\label{sp_D}
    DP(z,E_2(z),E_4(z),E_6(z),g_{0,1}(z),\dots,g_{m-1,m}(z))\\=z\frac{d}{dz}P(z,E_2(z),E_4(z),E_6(z),g_{0,1}(z),\dots,g_{m-1,m}(z)).
\end{multline}


We deduce Theorem~\ref{ML} using Nesterenko's conditional Multiplicity Lemma (Theorem~1.1, Chapter~10~\cite{NP}). This result deals with differential system of the following type:
\begin{equation} \label{syst_diff}
f_i'(\b{z})=\frac{A_i(z,\ull{f})}{A_0(z,\ull{f})}, \quad i=1,\dots,n,
\end{equation}
where $A_i(\b{z},X_1,\dots,X_n)\in\mcc[z,X_1,\dots,X_n]$ for $i=0,...,n$ (we suppose that $A_0$ is a non-zero polynomial).
\begin{remark}
It is easy to see that system~(\eqref{intro_system_E},\eqref{intro_system_g},\eqref{intro_system_g_1}) is of the type~\eqref{syst_diff}.
\end{remark}
One associates to the system~\eqref{syst_diff} the differential operator
\begin{equation} \label{defD}
D_A = A_0(\b{z}, X_1,\dots, X_n)\diff{\b{z}} + \sum_{i=1}^nA_i(\b{z}, X_1,\dots, X_n)\diff{X_i}.
\end{equation}
In our case (i.e. the case of the system~\eqref{syst_diff}) this formula gives exactly the differential operator $D$ as defined in~\eqref{intro_def_D}.
\begin{theorem}[Nesterenko] \label{theoNesterenko_classique}
Suppose that functions
\begin{equation*}
\ull{f} = (f_1(\b{z}),\dots,f_n(\b{z})) \in \mcc[[\b{z}]]^n
\end{equation*}
are analytic at the point $\b{z}=0$ and form a solution of the system~(\ref{syst_diff}).
If there exists a constant $K_0$ such that every $D$-stable prime ideal $\idp \subset \mcc[X_1',X_1,\dots,X_n]$,
$\idp\ne(0)$, satisfies
\begin{equation} \label{ordIleqKdegI}
\min_{P \in \idp}\ordz P(\b{z},\ull{f}) \leq K_0,
\end{equation}
then there exists a constant $K_1>0$ such that for any polynomial $P \in
\mcc[X_1',X_1,\dots,X_n]$, $P\ne 0$, the following inequality holds
\begin{equation} 
\ordz(P(\b{z},\ull{f})) \leq K_1(\deg_{\ul{X}'} P + 1)(\deg_{\ul{X}} P + 1)^n.
\end{equation}
\end{theorem}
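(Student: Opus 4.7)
The plan is a proof by contradiction via an iterative construction of prime ideals of increasing codimension. Suppose, for contradiction, that there exists $P_0\neq 0$ in $\mcc[X_1',X_1,\ldots,X_n]$ with $\ordz P_0(\b{z},\ull{f}) > K_1(\deg_{X'} P_0 + 1)(\deg_{X} P_0 + 1)^n$, where $K_1$ is to be chosen in terms of $K_0$ and $\max_i \deg A_i$. The goal is to manufacture from $P_0$ a nonzero $D_A$-stable prime ideal $\idp$ such that $\min_{P\in\idp}\ordz P(\b{z},\ull{f}) > K_0$, contradicting the hypothesis of the theorem.

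For any nonzero ideal $I\subset\mcc[X_1',X_1,\ldots,X_n]$, set $\theta(I):=\min_{P\in I\setminus\{0\}} \ordz P(\b{z},\ull{f})$ and associate to $I$ a bidegree $(d',d)$ controlling Bezout-type degrees in $X_1'$ and in $X_1,\ldots,X_n$ of the cycle defined by $I$. Start with $I_0:=(P_0)$, so $(d'_0,d_0)=(\deg_{X'}P_0,\deg_X P_0)$ and $\theta(I_0)>K_1(d'_0+1)(d_0+1)^n$. At step $i$, assume $I_i$ is unmixed of codimension $i$. Decompose it into its minimal primes and select one, $\idp_i$, that realizes the order $\theta(I_i)$ up to a combinatorial factor bounded by the bidegree, and whose bidegree is dominated by that of $I_i$. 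If $D_A \idp_i \subset \idp_i$, then $\idp_i$ is the desired $D_A$-stable prime and the argument terminates; otherwise choose $P\in\idp_i$ with $D_A P\notin\idp_i$ and set $I_{i+1}:=\idp_i + (D_A P)$, which has codimension $i+1$.

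Two technical estimates drive the induction. First, a Bezout-type inequality for polynomial ideals bounds the bidegree of $I_{i+1}$ by a constant (depending only on $\max_i \deg A_i$) times the bidegree of $\idp_i$; thus bidegrees grow at most geometrically and remain polynomially controlled in $(d'_0,d_0)$. Second, the identity $D_A P(\b{z},\ull{f}) = A_0(\b{z},\ull{f})\cdot \frac{d}{d\b{z}} P(\b{z},\ull{f})$ shows that applying $D_A$ decreases $\ordz$ by at most $\ordz A_0(\b{z},\ull{f})+1$, a quantity depending only on $A_0$ and $\ull{f}$. Combined with the loss factor from the minimal-prime decomposition, one obtains an inequality of the form $\theta(I_{i+1})\geq \theta(\idp_i)/\mu_i - c_i$, with $\mu_i,c_i$ bounded by a polynomial in the bidegree; hence $\theta$ stays large along the iteration provided $K_1$ was chosen sufficiently large.

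Because $\mcc[X_1',X_1,\ldots,X_n]$ has Krull dimension $n+1$, the construction cannot advance beyond codimension $n+1$; otherwise $I_{n+1}$ would lie in a maximal ideal, for which one can easily exhibit polynomials with $\ordz$ equal to zero, contradicting the accumulated lower bound on $\theta$. Termination must therefore occur at some step $i\leq n+1$ with a $D_A$-stable prime $\idp_i$ satisfying $\theta(\idp_i)>K_0$, which is the desired contradiction. The principal obstacle is the careful bookkeeping that ties together the Bezout growth of bidegrees and the propagation of the order of vanishing through the minimal-prime decomposition and the $D_A$-iteration; the exponents $1$ on $(\deg_{X'}P_0+1)$ and $n$ on $(\deg_X P_0+1)$ in the final bound reflect respectively the single $X_1'$ direction (along which $D_A$ acts only via $A_0\partial_z$) and the $n$ directions $X_1,\ldots,X_n$ available for successive codimension jumps.
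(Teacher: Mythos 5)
This theorem is not proved in the paper at all: it is Nesterenko's conditional multiplicity lemma, imported verbatim as Theorem~1.1 of Chapter~10 of the volume edited by Nesterenko and Philippon, and the paper's only task is to verify its hypothesis (via Proposition~\ref{ML_stable_ideals}). So there is no in-paper proof to compare your attempt against; what you have written is an outline of the known proof from that reference, and at the level of strategy it is the right one: descend through unmixed ideals of increasing codimension starting from $(P_0)$, at each stage either find a $D_A$-stable prime of large order (contradicting the hypothesis) or adjoin $D_AP$ for some $P$ with $D_AP\notin\idp_i$, control degrees by Bezout and orders by the identity $D_AP(\b{z},\ull{f})=A_0(\b{z},\ull{f})\frac{d}{d\b{z}}P(\b{z},\ull{f})$, and terminate at codimension $n+1$.

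The genuine gaps in your sketch are concentrated in the two sentences you pass over most quickly. First, ``associate to $I$ a bidegree controlling Bezout-type degrees of the cycle'' and ``select a minimal prime that realizes the order up to a combinatorial factor'' are exactly where the real work lives: the quantity $\min_{P\in I}\ordz P(\b{z},\ull{f})$ does not distribute over the minimal primes of an unmixed ideal in any obvious way, and Nesterenko's proof replaces it by an order attached to the Chow form (eliminating form) of the ideal, proves a comparison lemma between the two notions, and only then gets the multiplicative relation over components with multiplicities. Without that elimination-theoretic apparatus the inequality $\theta(I_{i+1})\geq\theta(\idp_i)/\mu_i-c_i$ is asserted, not proved. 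Second, $\idp_i+(D_AP)$ need not be unmixed of codimension $i+1$; one must pass to a suitable unmixed part (discarding components of wrong dimension or small order) and check that the retained part still carries both the degree bound and the order lower bound. Finally, the choice of $P$ matters: you need an element of $\idp_i$ of controlled bidegree with $D_AP\notin\idp_i$, which requires a small argument (e.g. that $\idp_i$ is generated in bounded degree relative to its Chow form data). These are not fatal objections to the plan --- they are the content of the roughly thirty pages of Chapter~10 --- but as written the proposal is a road map rather than a proof.
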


To apply Theorem~\ref{theoNesterenko_classique} it is sufficient to prove Proposition~\ref{ML_stable_ideals} here below.

\begin{proposition} \label{ML_stable_ideals}
If $\idp$ is a prime ideal of
$$
R=\mcc[z,X_1,X_2,X_3,Y_{0,1},\dots,Y_{m-1,m}]
$$
with $D\idp\subset\idp$, then either $z\in\idp$ or $\Delta=X_2^3-X_3^2\in\idp$.
\end{proposition}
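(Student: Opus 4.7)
\textit{Proof plan.} I would proceed in two phases: first, contract $\idp$ to the ``Ramanujan subring'' $R_0 := \mcc[z, X_1, X_2, X_3]$ and invoke the known analog of the statement for the pure Ramanujan system; second, handle the residual case by induction on the $Y$-chains, reducing each step to a ``primitive equation'' that is ruled out by Kozlov's algebraic independence theorem quoted in the introduction.

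For the first phase, observe that each operator $D_v$ with $v \geq 1$ involves only derivatives in the $Y$-variables, so these annihilate $R_0$ and $D|_{R_0} = D_0|_{R_0}$. Hence the contraction $\idp_0 := \idp \cap R_0$ is a $D_0$-stable prime of $R_0$. Nesterenko's analysis of the Ramanujan differential system (the present proposition in the case of no $Y$-variables; see~\cite{N1996} or~\cite{NP}, Chapter~10) guarantees that any non-zero $D_0$-stable prime of $R_0$ contains $z$ or $\Delta$. Thus if $\idp_0 \neq (0)$ we are done; otherwise, assuming $\idp_0 = (0)$ and $\idp \neq (0)$, I aim for a contradiction.

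Define $R^{(v)} := R_0\bigl[Y_{k, w} : 0 \leq k < w,\ w \leq v,\ w \text{ odd}\bigr]$, so $R^{(-1)} = R_0$ and $R^{(m)} = R$; each $R^{(v)}$ is preserved by $D$. I show $\idp \cap R^{(v)} = (0)$ for every odd $v$ by induction, the base case $v = -1$ being the standing hypothesis. For the step from $v-2$ to $v$, localize at $R^{(v-2)} \setminus \{0\}$ and set $K_{v-2} := \mathrm{Frac}(R^{(v-2)})$; the localized $\idp^e$ is a $D$-stable prime of $K_{v-2}[Y_{0, v}, \ldots, Y_{v-1, v}]$, on which $D$ acts by $D Y_{k, v} = Y_{k+1, v}$ for $k < v-1$ and $D Y_{v-1, v} = c_v \in K_{v-2}$, with $c_v := B_{v+1}(A_{v+1}(X_2, X_3) - 1)/(2v+2)$. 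I run a downward sub-induction on $k \in \{v-1, \ldots, 0\}$ to show that every intermediate contraction $\mathfrak{q}_k := \idp^e \cap K_{v-2}[Y_{k, v}, \ldots, Y_{v-1, v}]$ vanishes; the base $\mathfrak{q}_v = \idp^e \cap K_{v-2} = (0)$ is immediate from the outer hypothesis. For the step $\mathfrak{q}_{k+1} = (0) \Rightarrow \mathfrak{q}_k = (0)$, if $\mathfrak{q}_k \neq (0)$ I pick $P = a_d Y_{k, v}^d + a_{d-1} Y_{k, v}^{d-1} + \cdots \in \mathfrak{q}_k$ of minimal positive $Y_{k, v}$-degree $d$, with $a_i \in K_{v-2}[Y_{k+1, v}, \ldots, Y_{v-1, v}]$ and $a_d \neq 0$. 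Then $a_d \cdot DP - (D a_d) \cdot P \in \mathfrak{q}_k$ has $Y_{k, v}$-degree strictly less than $d$, so it either contradicts minimality or lies in $\mathfrak{q}_{k+1} = (0)$; in either case it vanishes, and matching the coefficient of $Y_{k, v}^{d-1}$ yields the ``primitive equation'' $D\phi = -d\, D Y_{k, v}$ with $\phi := a_{d-1}/a_d \in K_{v-2}(Y_{k+1, v}, \ldots, Y_{v-1, v})$.

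The main obstacle is ruling out this primitive. Rewriting the equation as $D(\phi + d\, Y_{k, v}) = 0$, I specialize $X_j \mapsto E_{2j}$ and $Y_{k', w} \mapsto g_{k', w}$---an injective substitution by Kozlov's algebraic independence; under it $D$ becomes $\delta = z\, d/dz$ acting on Laurent series, whose field of constants is $\mcc$. Hence $\phi + d\, Y_{k, v}$ specializes to a complex constant, which in turn expresses $g_{k, v}$ as a rational function of the remaining $E$-series and $g$-functions, contradicting Kozlov's algebraic independence of the full family $\{z, E_2, E_4, E_6\} \cup \{g_{u, w}\}_{0 \leq u < w \leq m}$. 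This contradiction closes both inductions and completes the proof.
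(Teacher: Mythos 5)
Your proposal is correct, and while it shares the paper's outer skeleton, it routes the decisive step quite differently. Both arguments contract $\idp$ to $\mcc[z,X_1,X_2,X_3]$ and quote the known classification of stable primes for the pure Ramanujan system (the paper cites \cite{Pellarin2006}, you cite \cite{N1996} and \cite{NP}; either reference serves), and both then climb a tower of $D$-stable subrings adjoining the variables $Y_{k,v}$ one at a time in the order $k=v-1,\dots,0$, seeking a contradiction at the first level where the contraction of $\idp$ becomes non-zero. The divergence is in how that contradiction is produced. The paper notes that the first non-zero contraction is a principal prime and invokes Lemma~\ref{lemma_principal}, whose proof classifies \emph{all} $D$-invariant principal primes of $R$ by means of two weight functions $\phi,\phi_2$ (to force the eigenvalue to be $aX_1+b$ with $a,b\in\mzz$) and the integrating factor $\Delta^{-a}z^{-b}$. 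You bypass that lemma entirely: after localizing at the fraction field of the preceding ring you take a relation of minimal positive degree $d$ in the new variable, form $a_d\,DP-(Da_d)P$ to kill the leading term, read off the primitive equation $D\bigl(a_{d-1}/a_d+d\,Y_{k,v}\bigr)=0$ from the coefficient of $Y_{k,v}^{d-1}$, and specialize, using that the kernel of $z\,d/dz$ on the fraction field of $\mcc[[z]]$ is $\mcc$ to contradict Kozlov's algebraic independence. This is the classical differential-algebra argument for adjoining a primitive, and it applies here precisely because each $DY_{k,v}$ lies in the previous ring of the tower; for the same reason it cannot replace the quoted input on the $X$-variables, where the system is genuinely nonlinear, so that part must still be taken as a black box in both proofs. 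What your route loses is the standalone classification of principal $D$-stable primes, which has independent interest; what it gains is a shorter, more transparent treatment of the $Y$-part with no weight computations. The individual steps check out: the degree drop in $a_d\,DP-(Da_d)P$, the $D$-stability of the localized rings, the vanishing forced by minimality together with $\mathfrak{q}_{k+1}=(0)$, and the non-triviality of the final relation (degree one in $Y_{k,v}$ with coefficient $d\neq 0$) are all sound.
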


\begin{proof}[\proofnameTwo] If we have the result announced in Proposition~\ref{ML_stable_ideals}, then any prime $D$-stable ideal $\idp$ contains the polynomial
\begin{equation}\label{def_Theta}
    \Theta:=z\Delta=z\left(X_2^3-X_3^2\right).
\end{equation}
In this case we have obviously
\begin{multline*}
    \min_{P \in \idp}\ordz P(z,E_2(z),E_4(z),E_6(z),g_{0,1}(z),\dots,g_{m-1,m}(z))\\\leq\ordz\Theta(z,E_2(z),E_4(z),E_6(z),g_{0,1}(z),\dots,g_{m-1,m}(z)),
\end{multline*}
The quantity $K_0:=\ordz\Theta(z,E_2(z),E_4(z),E_6(z),g_{0,1}(z),\dots,g_{m-1,m}(z))$ is an absolute constant, in particular independent of $\idp$ (because $\Theta$ is just a concrete polynomial). Also, the quantity $K_0$ is finite, because all the functions $z,E_2(z),E_4(z),E_6(z),g_{0,1}(z),\dots,g_{m-1,m}(z)$ are algebraically independent over $\mcc$ and for this reason no polynomial vanishes on this set (i.e. in particular, $\Theta(z,E_2(z),E_4(z),E_6(z),g_{0,1}(z),\dots,g_{m-1,m}(z))$ is a non-zero function, analytic at $z=0$).
\end{proof}

To prove Proposition~\ref{ML_stable_ideals}, we describe at first principal $D$-stable ideals of $R$.

\begin{lemma} \label{lemma_principal}
There exists only two $D$-invariant principal prime ideals of $R$, namely, the ideals generated by $z$ and $\Delta$.
\end{lemma}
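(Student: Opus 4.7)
Let $P$ be an irreducible generator of a $D$-invariant principal prime, so that $DP=QP$ for some $Q\in R$. The goal is $P\in\{z,\Delta\}$ up to scalar. My first step is to show $Q\in\mcc[z,X_1,X_2,X_3]$. Tracking the total degree $\deg_Y$ in all $Y_{u,v}$-variables, every summand of $D$ either preserves $\deg_Y$ (namely $D_0$ and the shifts $Y_{k+1,v}\partial_{Y_{k,v}}$ inside $D_v$ for $v\ge 3$) or decreases it by $1$ (the terminals $C_v\partial_{Y_{v-1,v}}$ and $D_1=\tfrac{1-X_2}{24}\partial_{Y_{0,1}}$). Therefore $\deg_Y(DP)\leq\deg_Y P$ while $\deg_Y(QP)=\deg_Y Q+\deg_Y P$, forcing $Q\in\mcc[z,X_1,X_2,X_3]$.

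Next I would extract information from the leading $Y$-coefficient of $P$. Order the $Y$-variables lexicographically so that $Y_{u,v}$ with smaller $u$ is lex-higher (breaking ties by $v$). Since $D(Y_{u,v})$ equals either $Y_{u+1,v}$ (strictly lex-smaller than $Y_{u,v}$) or a $Y$-free polynomial, a Leibniz-rule argument shows that $D$ applied to any element of $R$ has the same leading $Y$-monomial as the input, with the leading $Y$-coefficient acted on simply by $D_0$. Comparing leading terms in $DP=QP$ yields $D_0\,c^{*}=Q\,c^{*}$, where $c^{*}\in\mcc[z,X_1,X_2,X_3]$ is the leading $Y$-coefficient of $P$. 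By Nesterenko's classical theorem~\cite{N1996}, \cite{NP} (Chapter~10), every $D_0$-semi-invariant of $\mcc[z,X_1,X_2,X_3]$ is of the form $\alpha z^{a}\Delta^{b}$ with weight $a+bX_1$; hence $c^{*}=\alpha z^{a}\Delta^{b}$ and $Q=a+bX_1$ for some non-negative integers $a,b$.

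To finish, a case analysis on $(a,b)$ is required. If $(a,b)=(0,0)$, then $DP=0$; the chain-rule identity $z\,(d/dz)P(z,E_2(z),\ldots)=(DP)(z,E_2(z),\ldots)$ together with the algebraic independence of $E_2,E_4,E_6,g_{u,v}$ over $\mcc(z)$ forces $P$ to be a constant polynomial, contradicting irreducibility. If $a\ge 1$, I reduce modulo $(z)$ and repeat the leading-$Y$-coefficient argument in $\mcc[X_1,X_2,X_3,Y_{\cdot,\cdot}]$: this reduces to finding a $D_0|_{z=0}$-semi-invariant of $\mcc[X_1,X_2,X_3]$ of weight $a+bX_1$, but Nesterenko's result descends to show that such semi-invariants are only $\alpha\Delta^{b'}$ of weight $b'X_1$, with no scalar part; hence $\bar P=0$, i.e.\ $z\mid P$, and irreducibility gives $P=z$. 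The case $a=0$, $b\ge 1$ is handled symmetrically by reducing modulo $(\Delta)$: the only $D_0$-invariant principal prime of $\mcc[z,X_1,X_2,X_3]/(\Delta)$ is the image of $(z)$, so semi-invariants have scalar weight, incompatible with $bX_1$, which forces $\Delta\mid P$ and hence $P=\Delta$. The main obstacle is this last case: it requires transferring Nesterenko's classification to the quotient ring $\mcc[z,X_1,X_2,X_3]/(\Delta)$, for which I would use that every $D_0$-invariant prime in $\mcc[z,X_1,X_2,X_3]$ containing $\Delta$ must by Nesterenko contain $z$ or $\Delta$, and the latter is automatic; in the quotient this leaves $(z)$ as the unique invariant principal prime.
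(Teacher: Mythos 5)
Your opening moves are sound and genuinely different from the paper's: the $\deg_{\ul{Y}}$ argument forcing $Q\in\mcc[z,X_1,X_2,X_3]$, and the leading-$Y$-coefficient argument with respect to your lex order, reducing everything to a $D_0$-semi-invariant $c^{*}\in\mcc[z,X_1,X_2,X_3]$, are both correct; the classification $c^{*}=\alpha z^{a}\Delta^{b}$ with $D_0c^{*}=(a+bX_1)c^{*}$ does follow from Nesterenko's theorem on $D_0$-stable prime ideals, via the standard (but worth stating) fact that every irreducible factor of a semi-invariant is again a semi-invariant. The paper instead pins down its cofactor $aX_1+b$ by means of two ad hoc weight functions plus a Taylor-coefficient comparison at $z=0$; your route to $Q=a+bX_1$ with $a,b\in\mnn$ is arguably cleaner.

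The genuine gap is in your endgame, exactly where you flag ``the main obstacle'': the case $a=0$, $b\geq 1$. The reduction modulo $(\Delta)$ cannot be justified the way you propose. Nesterenko's theorem says a nonzero $D_0$-stable prime contains $z$ or $\Delta$; for a prime that already contains $\Delta$ this condition is vacuous, so it gives no information about the invariant primes of $\mcc[z,X_1,X_2,X_3]/(\Delta)$ --- in particular it does not ``leave $(z)$ as the unique invariant principal prime'' there. Worse, $\mcc[z,X_1,X_2,X_3]/(\Delta)$ is not a UFD (there $X_2^3=X_3^2$), so even a classification of its invariant principal primes would not classify its semi-invariants. (Your $a\geq 1$ case is fine, because the statement needed there --- that the semi-invariants of $\mcc[X_1,X_2,X_3]$ are exactly the $\alpha\Delta^{b'}$ --- does follow: such a semi-invariant is also a $D_0$-semi-invariant of $\mcc[z,X_1,X_2,X_3]$, hence of the form $\alpha z^{a'}\Delta^{b'}$ with $a'=0$.) The repair is the trick you already use in the case $(a,b)=(0,0)$, applied uniformly as the paper does: since $D(z^{-a}\Delta^{-b})=-(a+bX_1)\,z^{-a}\Delta^{-b}$, the rational function $S=P\,z^{-a}\Delta^{-b}$ satisfies $DS=0$, hence $\frac{d}{dz}S(z,E_2(z),\dots,g_{m-1,m}(z))=0$, and the algebraic independence of $z,E_2,E_4,E_6,(g_{u,v})$ over $\mcc$ forces $S$ to be a constant, i.e.\ $P=\alpha z^{a}\Delta^{b}$; irreducibility then yields $(a,b)\in\{(1,0),(0,1)\}$ in one stroke, with no reduction modulo $(z)$ or $(\Delta)$ needed.
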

\begin{proof}
Suppose that $A\in R$ is any irreducible polynomial with the property that $A|DA$. Thus
\begin{equation}\label{eq_DA_AB}
    DA=AB,\quad B\in R.
\end{equation}
We readily verify with the definition of $D$ that $\deg_{\ul{Y}}DA\leq\deg_{\ul{Y}}A$ and $\deg_{z}DA\leq\deg_{z}A$, hence~\eqref{eq_DA_AB} implies $B\in\mcc[X_1,X_2,X_3]$.


For any $F\in R$ we define the \emph{weight} of $F$ as
\begin{equation*}
    \phi(F):=\deg_tF(z,tX_1,t^2X_2,t^3X_3,t^{2m+2}\ul{Y}).
\end{equation*}
Then $\phi$ satisfies the following properties:
\begin{enumerate}
  \item For any $F\in R$
  \begin{equation*}
    \phi(DF)\leq \phi(F)+1.
  \end{equation*}
  \item For any $F,G\in R$
  \begin{equation*}
    \phi(FG)=\phi(F)+\phi(G).
  \end{equation*}
\end{enumerate}
 These properties together with~\eqref{eq_DA_AB} imply
  \begin{equation*}
    \phi(A)+\phi(B)=\phi(DA)\leq\phi(A)+1,
  \end{equation*}
hence $\phi(B)\leq 1$. Thus $B\in\mcc[X_1]$, $\deg B\leq 1$, i.e. $B=aX_1+b$, $a,b\in\mcc[z]$ and
\begin{equation} \label{eq_DA_lpA}
    DA=\left(aX_1+b\right)A.
\end{equation}
Also $\deg_zA+\deg_zB=\deg_zDA\leq\deg_zA$, hence $a,b\in\mcc$.

Now we consider another weight $\phi_2: R\rightarrow\mzz$. For any $F\in R$, we denote
\begin{equation*}
    \phi_2(F):=\deg_tF(z,tX_1,t^2X_2,t^3X_3,t^{-4}Y_{0,1},\dots,t^{-4m}Y_{0,m},t^{-4m+4}Y_{1,m},\dots,t^{-4}Y_{m-1,m})
\end{equation*}
(i.e. we assign to the variable $Y_{u,v}$ the weight $\phi_2(Y_{u,v}):=4(u-v)$).
Let $C$ be the sum of monomials of $A$ with minimal weight $\phi_2$. If we compare the sum of the monomials of weight $\phi_2(C)$ on both sides of~\eqref{eq_DA_lpA} and use the definition of $D$ we obtain
\begin{equation} \label{eq_zC_bC}
    z\frac{d}{dz}C=bC
\end{equation}
(indeed, for any monomial $M$ and any differential operator $D_v$, $v=1,3,5,\dots,m$, all the non-zero monomials of $D_v(M)$ have weight $\phi_2$ strictly bigger than $\phi_2(M)$, also the only term in $D_0$ that does not increase $\phi_2$ is $z\frac{d}{dz}$, hence~\eqref{eq_zC_bC}).
Comparing the coefficients on the both sides of~\eqref{eq_zC_bC} we conclude $b=\deg_z C$, in particular $b\in\mzz$.

Substituting $X_1=E_2(z)$, $X_2=E_4(z)$, $X_3=E_6(z)$, $Y_{u,v}=g_{u,v}(z)$, $0\leq u<v\leq m$ in~\eqref{eq_DA_lpA} we obtain
\begin{multline}\label{eq_ordzDA_ordzlpA}
    \left(aE_2(z)+b\right)A\left(z,E_2(z),E_4(z),E_6(z),g_{0,1}(z),\dots,g_{m-1,m}(z)\right)\\=DA\left(z,E_2(z),E_4(z),E_6(z),g_{0,1}(z),\dots,g_{m-1,m}(z)\right).
\end{multline}
Let
\begin{equation*}
    A\left(z,E_2(z),E_4(z),E_6(z),g_{0,1}(z),\dots,g_{m-1,m}(z)\right)=cz^M+(\text{terms of order }>M),
\end{equation*}
$c\ne 0$, be the (first term of the) Taylor series of $A\left(z,E_2(z),E_4(z),E_6(z),g_{0,1}(z),\dots,g_{m-1,m}(z)\right)$. In view of the property~\ref{sp_D} we have
\begin{equation*}
    DA\left(z,E_2(z),E_4(z),E_6(z),g_{0,1}(z),\dots,g_{m-1,m}(z)\right)=cMz^M+(\text{terms of order }>M).
\end{equation*}
Using the Taylor series for $E_2$, \eqref{intro_functions_E}, notably the fact that $E_2(z)=1+\text{terms of order }>1$, we readily deduce from~\eqref{eq_ordzDA_ordzlpA}
\begin{equation*}
    (a+b)cz^M+(\text{terms of order }>M)=cMz^M+(\text{terms of order }>M).
\end{equation*}
Comparing coefficients with $z^M$ in the l.h.s. and in the r.h.s. of~\eqref{eq_ordzDA_ordzlpA} and simplifying out $c$  we readily deduce $a+b=M$. We have already established $b\in\mnn$. Obviously, $M\in\mnn$ (as it is a degree in a Taylor series). We conclude $a\in\mzz$.

So we have established that coefficients $a,b$ involved in~\eqref{eq_DA_lpA} are in fact integers.

Note that
\begin{equation} \label{eq_D_Delta_a_z_b}
    D(\Delta^{-a}z^{-b})=\left(-aX_1-b\right)\Delta^{-a}z^{-b}.
\end{equation}
We denote
\begin{equation}\label{def_S}
    S\left(z,E_2,E_4,E_6,g_{0,v},\dots,g_{v-1,v}\right):=A\left(z,E_2,E_4,E_6,g_{0,v},\dots,g_{v-1,v}\right)\Delta^{-a}z^{-b}.
\end{equation}
Applying the differential operator $D$ to the r.h.s. of~\eqref{def_S} and using \eqref{eq_DA_lpA}, \eqref{eq_D_Delta_a_z_b} we find out
\begin{equation*}
    DS=0.
\end{equation*}
Using~\eqref{sp_D} on the latter equality we conclude
\begin{equation*}
    \frac{d}{dz}S(z,E_2(z),E_4(z),E_6(z),g_{0,1}(z),\dots,g_{m-1,m}(z))=0,
\end{equation*}
hence
\begin{equation*}
    S(z,E_2(z),E_4(z),E_6(z),g_{0,1}(z),\dots,g_{m-1,m}(z))\in\mcc.
\end{equation*}
Recall that functions $z,E_2,E_4,E_6,g_{0,v},\dots,g_{v-1,v}$ are all algebraically independent over $\mcc$, see~\cite{N2011} page~2. For this reason we deduce $S[X_0,X_1,X_2,X_3,\ul{Y}]\in\mcc$ and thereby
\begin{equation*}
    A=\Delta^az^b.
\end{equation*}
If we suppose that $A$ is irreducible, we obtain immediately that either $(a,b)=(1,0)$ or $(a,b)=(0,1)$.
\end{proof}

\begin{proof}[\proofnameMain]
We consider the following nested sequence of rings
\begin{multline} \label{extension_tower}
    \mcc[z,\ul{X}]\subset\mcc[z,\ul{X},Y_{0,1}]\subset\mcc[z,\ul{X},Y_{0,1},Y_{2,3}]\subset\mcc[z,\ul{X},Y_{0,1},Y_{1,3},Y_{2,3}]
    \\\subset\mcc[z,\ul{X},Y_{0,1},Y_{0,3},Y_{1,3},Y_{2,3}]
    \subset\dots\subset\mcc[z,\ul{X},Y_{0,1},\dots,Y_{m-3,m-2}]\\\subset\mcc[z,\ul{X},Y_{0,1},\dots,Y_{m-3,m-2},Y_{m-1,m}]\\\subset
    \mcc[z,\ul{X},Y_{0,1},\dots,Y_{m-3,m-2},Y_{m-2,m},Y_{m-1,m}]
    \\\subset\dots\subset\mcc[z,\ul{X},Y_{0,m},\dots,Y_{m-1,m}]=R.
\end{multline}
We readily verify with the definition of $D$ that every term $R_{i}$ appearing in the chain~\eqref{extension_tower} satisfies $DR_i\subset R_i$.

Let $\idp\subset R$ be a prime ideal of $R$ satisfying $D\idp\subset\idp$. If $\idp\cap\mcc[z,\ul{X}]\ne \{0\}$, it contains a polynomial $z\Delta$ as shown in~\cite{Pellarin2006}[Theorem~1.4]. So everything is proved in this case. We suppose henceforth $\idp\cap\mcc[z,\ul{X}]=\{0\}$.



We proceed with recurrence. As we suppose $\idp\ne\{0\}$ and $\idp\cap\mcc[z,\ul{X}]=\{0\}$, we find in the chain~\eqref{extension_tower} at some step an extension of rings $R_i\subset R_{i+1}$ satisfying $\idp\cap R_i=\{0\}$ and $\idp\cap R_{i+1}\ne\{0\}$. In this case the ideal (of the ring $R_{i+1}$) $\idp\cap R_{i+1}\ne\{0\}$ is a principal one, because we add exactly one variable at each step in the chain~\eqref{extension_tower}, i.e. $\trdeg_{R_i}R_{i+1}=1$. Hence $\idp\cap R_{i+1}$ is a $D$-stable principal ideal (of the ring $R_{i+1}$, and also this ideal generates a principal $D$-stable ideal of the ring $R$, because $D$-stability of a principal ideal means exactly the condition $Q|DQ$ on a generator of the ideal). We deduce with Lemma~\ref{lemma_principal} that $z\Delta\in\idp\cap R_{i+1}\subset\idp$, Q.E.D.
\end{proof}


}

\end{document}